\definecolor{BlueGreen}{RGB}{49,152,255}
\definecolor{Violet}{RGB}{120,80,120}
\definecolor{Olive}{RGB}{128, 128, 0}
\definecolor{MidnightBlue}{RGB}{25, 25, 112}
\newtheorem{corollary}{Corollary}
\begin{document}
\newtheorem{utv}{Lemma}\newtheorem{theorem}{Theorem}\newtheorem*{myth}{Theorem}
\global\long\global\long\def\GmLn#1#2{G_{#1}(t;#2)}
 \global\long\global\long\def\mExpLn#1#2{(#2-t)^{#1}e^{i\alpha\log|#2-t|}}
 \global\long\global\long\def\uExpLn#1{e^{i\alpha\,\log|#1-t|}}
\global\long\def\Bounded#1#2{\Xi_{#2}(#1)}

\global\long\global\long\def\mysz{{\textstyle }}
\renewcommand{\ln}{\log}

%%%%%%%%%%%%%%%%%%%%%%%%%%%%%%%%%%%%

%%%%%%%%%%%%%%%%%%%%%%%%%%%%%%%%%%%%

\begin{center}
{\Large On properties of optimal controls for~an~inverted spherical
pendulum}
\par\end{center}{\Large \par}

\begin{center}
L.~Manita$^{1}$ and M.~Ronzhina$^{2,3}$
\par\end{center}

\begin{center}
{\small $^{1}$ NRU Higher School of Economics, Moscow State Institute
of Electronics and Mathematics, Moscow, Russia }\\
{\small e-mail: lmanita@hse.ru}
\par\end{center}{\small \par}

\begin{center}
{\small $^{2}$ Gubkin Russian State University of Oil and Gas (National
Research University), }\\
{\small $^{3}$ NRU Higher School of Economics, Faculty of Computer
Science, Moscow, Russia}\\
{\small e-mail: maryaronzhina@gmail.com}
\par\end{center}{\small \par}
\begin{abstract}
In this paper we study an  optimal control problem that is affine
in two-dimensional bounded control. The problem is related to the
stabilization  of an inverted spherical pendulum in the vicinity
of the upper unstable equilibrium. We find solutions stabilizing the
pendulum in a finite time, wherein the corresponding optimal controls
perform an infinite number of rotations along the circle $S^{1}$.
\end{abstract}

\section{Introduction}

{\let\thefootnote\relax\footnote{

This research was supported in part by the Russian Foundation for
Basic Research under grant no. 17-01-00805.

Key words: linear-quadratic optimal control problem, hamiltonian system,
singular extremal, blow-up singularity, stabilization, inverted spherical
pendulum

Mathematics Subject Classification: $34H05,34H15,49N10,49N90$

}}\vspace*{-1.\baselineskip}

 Models of inverted pendulum systems are widely used to study the
dynamics of different complex nonlinear objects in robotics, mechanics,
aerospace engineering, personal transport systems \cite{japan,Nicolosi,humanoid-robot,Precup-2008}.
 For pendulums, the upper vertical position is obviously unstable.
However, it was proved that it is possible to turn this position into
a stable one, for example, if the suspension point of a planar pendulum
performs vertical oscillations \cite{Kapitza,Arnold} or moves along
a horizontal line (e.g. \cite{Formal-2006,mart-form2008,manita-ronzhina}).
 It turns out that motions of a spherical inverted pendulum can also
be stabilized by choosing an appropriate external control, e.g. \cite{Liu-2008-1,Gutierrez-1,Postelnik-1,tracking-2009-Cosolini}.
Many methods to control pendulum systems use an optimal control technique:
a stabilizing controller is derived from minimization of a quadratic
cost functional (LQR controllers). But applied to real physical systems,
such controllers may produce relatively large deviations of pendulum
systems from the upper equilibrium position. To improve this, there
are different approaches, i.e., in \cite{Nelson1994} it was proposed
to combine optimal and neural network techniques. In this paper, we
consider the stabilization problem of the inverted spherical pendulum
in terms of minimizing the mean square deviation of the pendulum from
the unstable upright equilibrium point over an infinite time interval.
  We assume that  the spherical inverted pendulum is on a movable
base which  moves in the horizontal plane under the influence of
a planar bounded force.  We study the behaviour of solutions for
the linearized model. The corresponding optimal control problem is
affine in two-dimensional bounded control. We show that  optimal
solutions of this problem exhibit complicated behaviour. This is due
to the fact that the upright equilibrium is a singular mode for the
problem under consideration.

Singular modes are characterized by the fact that over some open interval
the Hamiltonian reaches a maximum at more than one point, that is,
the optimal control is not determined directly from the maximality
condition of the Pontryagin maximum principle.   Singular solutions
 appear in many applications:  optimal spacecraft flights (intermediate
thrust arcs) or problems of spacecraft reorientation \cite{Robbins,Seywald1993,Park-space,Trelat-1-chattering,Trelat-2,Goh-2008},
robotics (controlling manipulators \cite{Zel-Bor1994,Manita2000},
the Dubins car problem \cite{Agrachev-Sachkov}), mathematical models
in economics \cite{Zelikin-2005,Yegorov-2015}, biomedical problems
\cite{Ledzewicz-2009,Ledzewicz-Sch-2012,Ledzewicz-book1,Yegorov}.
 For more details on singular solutions  see, for example, \cite{Kupka-1990,Zel-Bor1994,Bonnard-Chyba2003}. 

Often optimal trajectories consist of nonsingular and singular arcs
and the concatenation structures of these arcs  can be very irregular,
for example, the chattering or the Fuller phenomenon (an infinite
number of control discontinuities in a finite time interval) \cite{Zel-Bor1994,Zel-Manita-1,Ledzewicz-2009,Trelat-2,Shen-chattering,Robbins},
iterated Fuller singularities \cite{2019-Boarotto-iterated-Fuller-singularities},
a chaotic behaviour of bounded pieces of optimal trajectories \cite{Z-L-H_2017}.
Such structure of optimal controls, rather complicated from a mathematical
point of view, is very typical for controlling  systems that possess
singular regimes. 

In \cite{lob-1} it was proved that for some initial conditions the
optimal control problem for the inverted spherical pendulum  is reduced
to the problem with scalar bounded control. In this case there exist
optimal chattering solutions.   In the present paper,   for
initial positions of the pendulum close to upper equilibrium, we find
optimal solutions stabilizing the pendulum in the upper position in
a finite time. For this solutions  the corresponding optimal controls
perform an infinite number of rotations along the circle $S^{1}$.
 Note that most of the results for problems with singular solutions
and bounded multidimensional control were obtained when the control
set is a multi-dimensional rectangle or a polyhedron \cite{Ledzewicz-book2,Chyba-2003-Application-to-Controlled-Mechanical-Systems}.
The structure of optimal solutions for the control set, which is a
convex set but not a polyhedron, was studied, e.g., in \cite{Chukanov,Z-L-H_2017,Lokut-Myrikova-2019}.
This paper is a extension of the result \cite{Chukanov,Zel-Bor1994}
obtained for homogeneous problem in which the control set is the unit
disc to the nonhomogeneous case.

\section{Problem Formulation }

\begin{figure}
\begin{centering}
\includegraphics[width=0.95\textwidth]{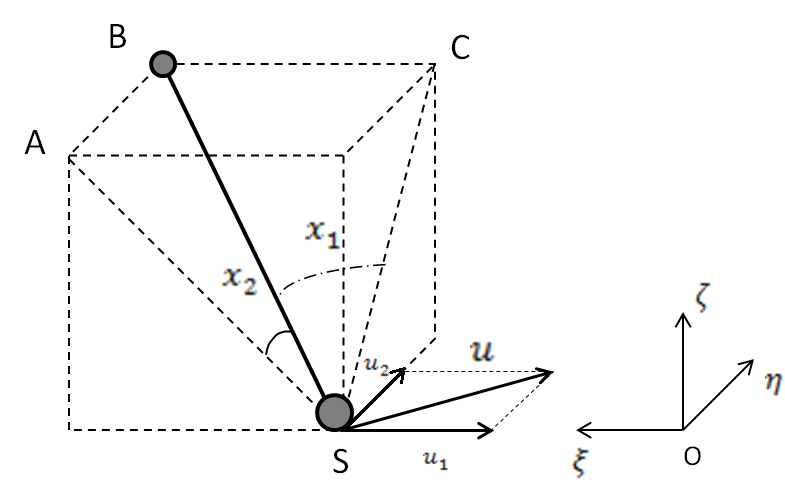}
\par\end{centering}

\caption{the inverted spherical pendulum}
\end{figure}
We consider a mathematical model of the spherical inverted pendulum
with a moving support point. The pendulum consists of the point mass
$B$ on the end of the rigid massless rod of length $l$ (Fig.~1).
The rod is attached by a hinge to the moving support point $S$. We
suppose that there is no friction in the hinge. The support point
$S$ can move in the horizontal plane under the action of an external
planar control force $u=\left(u_{1},u_{2}\right)\in\mathbb{R}^{2}$.
The control force is assumed to be bounded: $\,|u_{1}^{2}+u_{2}^{2}|\leq1$.

Fix some coordinate system $O\xi\eta\zeta$. The position of $S$
is described by $(\xi,\eta,0)$. The position of the pendulum $B$
is described by $\left(x_{1},x_{2}\right)$: $x_{1}=\angle CSB$ is
the angle between $SB$ and $O\eta\zeta$, $x_{2}=\angle ASB$ is
the angle between $SB$ and $O\xi\zeta$.\textbf{ }Coordinates $\left(x_{1},x_{2}\right),\,-\frac{\pi}{2}\leq x_{1,2}\leq\frac{\pi}{2}$
are considered locally in the vicinity of the upper equilibrium position.
They are related to the standard spherical coordinates as follows:
\[
x_{1}=\mathrm{arcsin}(\sin\theta\sin\varphi),\, x_{2}=\mathrm{arcsin}(\sin\theta\cos\varphi)\]
where $\theta$ is a zenith angle, $0\leq\theta\leq\pi$, $\varphi$
is an azimuth angle, $0\leq\varphi\leq2\pi$.

We define the generalized coordinates and generalized forces: \[
q=\left(\xi,\eta,x_{1},x_{2}\right),\quad Q=\left(u_{1},u_{2},0,0\right)\]
  Using the Euler-Lagrange equations  we obtain nonlinear equations
of motion of the spherical inverted pendulum with a moving support
point  \cite{lob-1}\begin{align}
(M+m)\ddot{\xi}+ml\ddot{x}_{1}\cos x_{1}-ml\dot{x}_{1}^{2}\sin x_{1} & =u_{1}\nonumber \\
(M+m)\ddot{\eta}+ml\ddot{x}_{2}\cos x_{2}-ml\dot{x}_{2}^{2}\sin x_{2} & =u_{2}\label{eq:mot-with_u}\\
\ddot{\xi}b_{11}+\ddot{x}_{1}b_{13}+\ddot{x}_{2}b_{14}+\dot{x}_{1}^{2}a_{11}+\dot{x}_{2}^{2}a_{12}+\dot{x}_{1}\dot{x}a_{13}+c_{1} & =0\nonumber \\
\ddot{\eta}b_{22}+\ddot{x}_{1}b_{23}+\ddot{x}_{2}b_{24}+\dot{x}_{1}^{2}a_{21}+\dot{x}_{2}^{2}a_{22}+\dot{x}_{1}\dot{x_{2}}a_{23}+c_{2} & =0\nonumber \end{align}
where $g$ is the acceleration of gravity, $m$ is the mass of $B$,
$M$ is the mass of $S$, $b_{ij},\, a_{ij},\, c_{i}$ are some functions
of $\cos x_{k}$ and $\sin x_{k}$.

We assume that the initial state of the system is in a sufficiently
small neighbourhood of the upper unstable equilibrium position \begin{equation}
x_{1}=\dot{x}_{1}=x_{2}=\dot{x}_{2}\equiv0\label{eq:upper position}\end{equation}
 We study the minimization problem of the mean square deviation of
the pendulum from~(\ref{eq:upper position}) over an infinite time
interval:

\[
\int\limits _{0}^{\infty}(x_{1}^{2}(t)+x_{2}^{2}(t))\, dt\to\min\]
 By linearizing~(\ref{eq:mot-with_u})  about~(\ref{eq:upper position})
and eliminating the variables $\xi$ and $\eta$ we get\begin{equation}
\ddot{x}_{1}=\frac{M+m}{Ml}gx_{1}-\frac{1}{Ml}u_{1},\quad\quad\ddot{x}_{2}=\frac{M+m}{Ml}gx_{2}-\frac{1}{Ml}u_{2}\label{eq:lin-sys2}\end{equation}
 We rewrite the linearized system~(\ref{eq:lin-sys2}) in matrix
form $\ddot{x}=Kx-\left(Ml\right)^{-1}u$, where $K=kE$, $k=g\frac{M+m}{Ml}$
and $E$ is the identity matrix. Without loss of generality we assume
that $Ml=1$. Thus we come to the two-input control-affine problem.

\section{Optimal control problem. Main result }

\emph{Problem 1 }(\emph{P1})\emph{.} \[
\int\limits _{0}^{\infty}\langle x\left(t\right),x\left(t\right)\rangle\, dt\to\mathrm{inf}\]
\[
\dot{x}=y,\quad\dot{y}=Kx+u,\quad\left\Vert u\left(t\right)\right\Vert \leq1\]
\[
x(0)=x^{0},\quad y(0)=y^{0}.\]
Here $x,\, y,\, u\in\mathbb{R}^{2}$, $K$ is a $2\times2$ diagonal
matrix, $\langle\cdot,\cdot\rangle$ and $\left\Vert \cdot\right\Vert $
are the scalar product and the standard Euclidean norm on $\mathbb{R}^{2}$.
Throughout the paper we assume that the matrix $K$ is an arbitrary
non-degenerate diagonal matrix.  

The following theorem is our main result for \emph{P1.}

\begin{theorem} \label{mainTh} There exist a family of optimal
solutions of Problem 1 that are spiral-like \begin{eqnarray*}
x^{*}\left(t\right) & = & k_{x}\,\mExpLn 2T\,\left(1+g_{x}\left(T-t\right)\right),\quad\\
y^{*}\left(t\right) & = & k_{y}\,\mExpLn{\null}T\,\left(1+g_{y}\left(T-t\right)\right),\\
u^{*}(t) & = & k_{u}\,\uExpLn T\,\left(1+g_{u}\left(T-t\right)\right),\end{eqnarray*}
and all its possible rotations and reflections. They hit the origin
in a finite time $T$ (hitting time) making a countable number rotations.
Here\begin{align*}
\, & \alpha=\pm\sqrt{5},\quad k_{x},\, k{}_{y},\, k{}_{u}\in\mathbb{C},\quad i^{2}=-1,\\
\, & g_{x,y,u}\left(T-t\right)=o\left(T-t\right)^{\sigma},\,\,\sigma>0,\quad\mbox{as}\quad t\rightarrow T-0.\end{align*}
\end{theorem}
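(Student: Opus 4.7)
The plan is to construct the spiral solutions as Pontryagin extremals with a self-similar asymptotic shape near the origin, then promote them to genuine minimizers via a field-of-extremals / Bellman-function argument.

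First, I would apply the Pontryagin Maximum Principle. With adjoint variables $p$ for $x$, $q$ for $y$, and cost multiplier $\psi_{0}=-1$, the Hamiltonian of Problem~1 reads
\[
H(x,y,p,q,u)=\langle p,y\rangle+\langle q,Kx+u\rangle-\langle x,x\rangle,
\]
yielding costate equations $\dot p=-Kq+2x$, $\dot q=-p$, and wherever $q\neq 0$ the nonsingular maximiser $u=q/\|q\|$. The point $(x,y,u)=(0,0,0)$ is the singular stratum where the maximum condition degenerates; the spirals live on nonsingular arcs that terminate at this singular stratum in finite time~$T$.

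Second, since $\dot x=y$, $\dot y=Kx+u$, and $Kx$ is of higher order than $u$ as $\tau=T-t\to0$, I would try the self-similar ansatz $u\sim k_{u}e^{i\alpha\log\tau}$, $y\sim k_{y}\tau\,e^{i\alpha\log\tau}$, $x\sim k_{x}\tau^{2}e^{i\alpha\log\tau}$. Direct differentiation yields $k_{y}=-(2+i\alpha)k_{x}$ and $k_{u}=(1+i\alpha)(2+i\alpha)k_{x}$. The costate equations then force $q\sim\tilde k_{q}\tau^{4}e^{i\alpha\log\tau}$ and $p\sim(4+i\alpha)\tilde k_{q}\tau^{3}e^{i\alpha\log\tau}$, with $-(3+i\alpha)(4+i\alpha)\tilde k_{q}=2k_{x}$ obtained from balancing $\dot p=-Kq+2x$ at order $\tau^{2}$. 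The bang condition $u=q/\|q\|$ combined with $\|u\|=1$ requires $k_{u}$ and $\tilde k_{q}$ to be positive real multiples of one another, which reduces, after cancelling $|k_{x}|$, to
\[
(1+i\alpha)(2+i\alpha)(3+i\alpha)(4+i\alpha)\in\mathbb{R}_{<0}.
\]
The imaginary part of this product expands to $10\alpha(5-\alpha^{2})$, so the nonzero real roots are $\alpha=\pm\sqrt{5}$; the real part at those roots equals $-126<0$, so the sign condition is automatic, and the admissible spiral frequencies are exactly $\alpha=\pm\sqrt{5}$.

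Third, I would promote the leading term to an actual extremal. Writing
\[
x(t)=k_{x}\tau^{2}e^{i\alpha\log\tau}(1+g_{x}(\tau)),\qquad y(t)=k_{y}\tau\,e^{i\alpha\log\tau}(1+g_{y}(\tau)),
\]
and similarly for $u,p,q$, the full Hamiltonian system (now retaining the $Kx$ term and the full nonlinearity of $u=q/\|q\|$) becomes a quasilinear system for the remainders which vanishes at $\tau=0$. The linearisation about the leading ansatz is governed by a characteristic matrix $M(\alpha,\sigma)$, invertible off a discrete set of resonant exponents; choosing $\sigma>0$ smaller than the smallest positive resonance and performing a contraction on the weighted space $\{g:\sup_{\tau\in(0,\delta]}\tau^{-\sigma}|g(\tau)|<\infty\}$ produces the unique solution with the claimed asymptotics and with $g_{\bullet}(\tau)=o(\tau^{\sigma})$, as required in the theorem statement.

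Finally, to pass from extremality to optimality, I would assemble the constructed spirals together with their rotations and reflections coming from the $SO(2)\times\mathbb{Z}_{2}$ symmetries of Problem~1 (available for $K=kE$, and reduced to $\mathbb{Z}_{2}\times\mathbb{Z}_{2}$ for a general diagonal $K$) into a field that foliates a neighbourhood of the origin in $\mathbb{R}^{4}$. Optimality then follows from a Hamilton--Jacobi--Bellman / verification argument along the lines of \cite{Chukanov,Zel-Bor1994} for the homogeneous case: one checks that the associated Bellman function is continuously differentiable on this field and satisfies the HJB equation. The main obstacle I expect is precisely this last step, namely verifying that the family of spirals together with the singular manifold they hit assembles into a smooth Lagrangian foliation near the origin, sufficient to define a regular enough value function. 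A secondary, more technical obstacle is excluding resonances in step three so that the contraction argument actually closes.
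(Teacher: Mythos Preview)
Your approach is coherent but takes a genuinely different route from the paper, and your final step has a dimensional gap.

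\textbf{Construction of the extremals.} You and the paper agree on the ansatz and the determination of $\alpha=\pm\sqrt{5}$ (the paper imports this from \cite{Zel-Bor1994,Chukanov} for the $K=0$ model). To upgrade the leading term to an exact extremal, however, the paper does \emph{not} run a weighted contraction on the remainders. Instead it performs a quasi-homogeneous \emph{blow-up} of the Hamiltonian system at the origin: with weights $(4,3,2,1)$ on $(z_{1},z_{2},z_{3},z_{4})$ and the reparametrisation $ds=\mu^{-1}\,dt$, the leading spiral becomes a \emph{periodic orbit} $\xi^{0}(s)$ on the boundary cylinder $\{\mu=0\}$. Since the $K$-terms enter only at order $\mu^{2}$, the perturbed and model blown-up systems coincide on $\{\mu=0\}$. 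The heart of the proof is Lemma~1, which computes all nine Floquet (characteristic) exponents of $\xi^{0}$ explicitly via a Lyapunov transformation: exactly one is negative ($\lambda_{1}=-1$, aligned with the $\mu$-direction), one is zero, and the remaining seven have positive real part. Hyperbolicity of $\xi^{0}$ plus the invariant-manifold theorem yields a two-dimensional stable manifold, and Lemma~2 together with the blow-down recovers the stated asymptotics with an explicit $\sigma>0$. What you call ``excluding resonances'' is thus replaced by a single concrete spectral computation; your contraction scheme could presumably be made to work, but it would amount to reproving this Floquet calculation in disguise, and the paper's route is both shorter and more robust.

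\textbf{Optimality.} The paper does not carry out an HJB/field-of-extremals verification at all: it declares Theorem~1 an ``obvious corollary'' of the extremal construction (Theorem~2), leaning on the existence--uniqueness of optimisers for \emph{P1} established in \cite{Zel-Bor1994}. Your proposed alternative---assembling the spirals and their symmetry images into a field foliating a neighbourhood of the origin in $\mathbb{R}^{4}$---has a genuine gap: the constructed spirals depend on only two real parameters (the hitting time $T$ and, for $K=kE$, one rotation angle), hence sweep out a two-dimensional manifold in the four-dimensional state space. They cannot foliate an open set, so no Bellman function can be read off from this family alone. A complete synthesis near the origin must incorporate the other extremals (e.g.\ the chattering ones the paper cites for initial data in eigenspaces of $K$), which is well beyond what your step four sketches.
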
 Hereinafter we use the complex notation for vectors
in $\mathbb{R}^{2}$: \[
Re^{i\varphi}=\left(R\cos\varphi,R\sin\varphi\right)\]
In the next section, we give some definitions and results obtained
earlier \cite{lob-1,Zel-Bor1994,Zel-1997} for \emph{P1} and which
we will use to prove the main result.

\subsection{Optimal solutions and singular control }

It was proved \cite{Zel-Bor1994} that for any $\left(x^{0},y^{0}\right)$
from a small enough neigbourhood of the origin, there exists a unique
solution in \emph{P1. }The optimal solutions hit the origin in finite
time $T$ which  is a continuous function of $\left(x^{0},y^{0}\right)$
and satisfies \[
C_{1}\max\{\sqrt{|x^{0}|},|y^{0}|\}\leq T(x^{0},y^{0})\leq C_{2}\max\{\sqrt{|x^{0}|},|y^{0}|\}\]
 for some positive constant $C_{1}$ and $C_{2}$. It turns out that
the optimal control $u_{opt}\left(t\right)$ has no limit as $t\rightarrow T-0$.
This irregular control behaviour is due to the fact that the origin
in this problem is a singular point. 

To define a singular mode we apply Pontryagin's maximum principle
to~\emph{P1}.  If $\left(x\left(t\right),y\left(t\right),u_{opt}\left(t\right)\right)$
is an optimal solution, then there exist continuous $\mathbb{R}^{2}$-valued
functions $\phi\left(t\right)$, $\psi\left(t\right)$ and a nonnegative
constant $\lambda_{0}$ such that\[
\dot{\phi}=-\frac{\partial H}{\partial x}=\lambda_{0}x-K\psi,\quad\dot{\psi}=-\frac{\partial H}{\partial y}=-\phi,\]
\begin{equation}
\dot{x}=\frac{\partial H}{\partial\phi}=y,\quad\dot{y}=\frac{\partial H}{\partial\psi}=Kx+u_{opt}\label{eq:gamilt-syst2}\end{equation}
 Here $H$ is the Hamiltonian\[
H(x,y,\phi,\psi)=-\frac{\lambda_{0}}{2}\langle x,x\rangle+\langle y,\phi\rangle+\langle Kx,\psi\rangle+\langle u,\psi\rangle\]
The optimal control $u_{opt}\left(t\right)$ is determined by the
maximum condition:\[
H\left(x\left(t\right),\phi\left(t\right),\psi\left(t\right),u_{opt}\left(t\right)\right)=\max_{\left\Vert u\left(t\right)\right\Vert \leq1}H\left(x\left(t\right),\phi\left(t\right),\psi\left(t\right),u\right)=\,\]
\begin{equation}
=-\frac{\lambda_{0}}{2}\langle x,x\rangle+\langle y,\phi\rangle+\langle Kx,\psi\rangle+\max_{\left\Vert u\left(t\right)\right\Vert \leq1}\langle u,\psi\rangle\label{eq:max-u}\end{equation}
It can be shown that $\lambda_{0}\ne0$ for (\ref{eq:gamilt-syst2})-(\ref{eq:max-u}).
In what follows we assume  $\lambda_{0}=1$. From (\ref{eq:max-u}),
we obtain $u_{opt}\left(t\right)=\psi\left(t\right)/\left\Vert \psi\left(t\right)\right\Vert $
if $\psi\left(t\right)\neq0$. If $\psi=0$, then any admissible control
meets (\ref{eq:max-u}).

Denote $z_{1}=\psi,\, z_{2}=-\phi,\, z_{3}=-x,\, z_{4}=-y.$ In the
coordinates $z=\left(z_{1},z_{2},z_{3,}z_{4}\right)\in\mathbb{R}^{8}$
the system (\ref{eq:gamilt-syst2})--(\ref{eq:max-u}) becomes  \begin{equation}
\dot{z}_{1}=z_{2},\quad\dot{z}_{2}=z_{3}+Kz_{1},\label{eq:ham-z}\end{equation}
\[
\dot{z}_{3}=z_{4},\quad\dot{z}_{4}=-u+Kz_{3},\quad u=z_{1}/\left\Vert z_{1}\right\Vert \]

A solution $z\left(t\right)$ of~(\ref{eq:ham-z}) is called \emph{a
singular}\textbf{ }one on an interval $(t_{1},t_{2})$, if~$z_{1}\left(t\right)=0$
for all $t\in(t_{1},t_{2})$. For (\ref{eq:ham-z}) $z\left(t\right)=0$
is the unique singular solution \cite{Zel-1997}. 

If $x^{0}$ and $y^{0}$ lie in one eigenspace of the matrix $K$,
then \emph{P1} is reduced to the perturbed Fuller problem \cite{lob-1}
with a scalar control. In this case the optimal trajectory attains
the origin in a finite time, the corresponding optimal control does
not have a limit when time tends to the moment of junction of the
nonsingular arc with the singular one. And the optimal control has
an infinite number of switchings in a finite time interval (\emph{chattering}
control) \cite{lob-1}.  

In present paper we prove the existence of optimal spiral-similar
solutions of \emph{P1} that attain the singular point $z=0$ in a
finite time making a countable number of rotations. We give the main
ideas of the proof of this result. We consider a model problem for
which optimal logarithmic spirals were found \cite{Zel-Bor1994,Chukanov}.
We prove that in the neighbourhood of the origin the behaviour of
optimal solutions of \emph{P1} is determined by optimal solutions
of the \emph{model problem}. For this we apply Pontryagin's maximum
principle to Problem 1 and to the model problem and obtain two Hamiltonian
systems. Then we use the blow-up method in the origin. Analyzing the
dynamics of both blown-up Hamiltonian systems, we get that they coincide
on the image of the origin. Moreover we show that there exist periodic
trajectories which are solutions of both systems and we prove that
the periodic orbits are hyperbolic and their stable manifolds are
woven in logarithmic spirals. Finally we perform the blow-down and
get the main results for our problem.

\subsection{Model problem}

Let $K=0$. Then \emph{Problem 1} takes the form

\emph{Problem 2 }(\emph{P2})\emph{.} \[
\int\limits _{0}^{\infty}\langle x\left(t\right),x\left(t\right)\rangle\, dt\to\mathrm{inf}\]
\[
\dot{x}=y,\quad\dot{y}=u,\quad\left\Vert u\left(t\right)\right\Vert \leq1\]
\[
x(0)=x^{0},\quad y(0)=y^{0}.\]
 The Hamiltonian system for \emph{P2} is as follows \begin{equation}
\begin{aligned}\dot{z}_{1} & =z_{2}, &  &  & \dot{z}_{2} & =z_{3}\\
\dot{z}_{3} & =z_{4}, &  &  & \dot{z}_{4} & =-u, &  &  & u & =z_{1}/\left\Vert z_{1}\right\Vert \end{aligned}
\label{eq:model-hamilton_syst}\end{equation}
For \emph{Problem 2}, all the results formulated in the previous
section hold. And besides it was found \cite{Zel-Bor1994,Chukanov}
that the system (\ref{eq:model-hamilton_syst}) has solutions in the
form of logarithmic spirals \begin{eqnarray}
z_{m\zeta}^{*}(t) & = & -\zeta A_{m-1}\mExpLn{5-m}{T^{*}},\quad m=\overline{1,4},\label{eq:log-spir}\\
u_{\zeta}^{*}(t) & = & -\zeta\uExpLn{T^{*}},\quad0\leq t<T^{*}\nonumber \end{eqnarray}
 Here$\,\zeta\in\mathcal{SO}(2),\,\, i^{2}=-1,\,\,\alpha=\pm\sqrt{5},\,\, A_{0}=-\frac{1}{126},\,\, A_{m+1}=-A_{m}(4-m+i\alpha),$
$m=0,1,2$. The trajectories (\ref{eq:log-spir}) hit the origin in
a finite time $T^{*}$, and the optimal control $u^{*}(t)$ performs
an infinite number of rotations along the circle $S^{1}$.  We will
show that there are similar optimal logarithmic spirals when $K\neq0$. 

\begin{theorem} \label{th22} In a sufficiently small neighbourhood
of the origin there exist the following solutions of (\ref{eq:ham-z}):\begin{eqnarray*}
z_{m}(t) & = & k_{m}\,\mExpLn{5-m}T\,\left(1+g_{m}\left(T-t\right)\right),\quad m=\overline{1,4},\\
u(t) & = & k_{0}\,\uExpLn T\,\left(1+g_{0}\left(T-t\right)\right),\qquad\end{eqnarray*}
where $k_{m}\in\mathbb{C}$, $g_{m}\left(T-t\right)=o\left(T-t\right)^{\sigma},\,\,\sigma>0,$
as $t\rightarrow T-0$. \end{theorem}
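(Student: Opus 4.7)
The plan is to treat system (\ref{eq:ham-z}) as a perturbation of the model Hamiltonian system (\ref{eq:model-hamilton_syst}) and to transfer the explicit logarithmic-spiral solutions (\ref{eq:log-spir}) of the latter to the former by a stable-manifold argument after desingularizing the origin. The ansatz in Theorem~\ref{th22} dictates the natural quasi-homogeneous weights: $z_m$ should scale like $(T-t)^{5-m}$, so I introduce a ``radial'' variable $\rho=T-t$ together with rescaled coordinates $w_m=z_m/\rho^{5-m}$ and reparametrize time by $\tau=-\log\rho$. Under this blow-up the terminal point $z=0$ attained at $t=T$ is replaced by an invariant exceptional divisor $\{\rho=0\}$ in $(w,\rho)$-space, and trajectories hitting the origin in finite time correspond to orbits that reach $\{\rho=0\}$ as $\tau\to+\infty$.

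After the blow-up I would write both Hamiltonian systems---the one for \emph{P1} with $K\neq 0$ and the model system for \emph{P2}---in the variables $(w,\rho)$ with time $\tau$, and compare them. The key observation is that each perturbing term $Kz_1$ or $Kz_3$ carries two more units of weight than the dominant terms $z_2$ and $z_4$ against which it is balanced, so after rescaling it is multiplied by an extra factor of $\rho^{2}$. Consequently, the vector field for \emph{P1} restricts on $\{\rho=0\}$ to exactly the vector field for the model problem, and differs from it only by smooth terms of order $O(\rho^{2})$ off the divisor. In particular, the divisor $\{\rho=0\}$ is invariant for both flows.

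On $\{\rho=0\}$ the explicit solutions (\ref{eq:log-spir}) become periodic orbits $\gamma^{*}$ of the blown-up flow: substituting $z_m^{*}$ into $w_m=z_m/\rho^{5-m}$ yields $w_m(\tau)=-\zeta A_{m-1}e^{-i\alpha\tau}$, which is $2\pi/|\alpha|$-periodic in $\tau$ with $\alpha=\pm\sqrt{5}$. The next step is to linearize the blown-up model flow along $\gamma^{*}$ and compute its Floquet exponents; the value $\alpha=\pm\sqrt{5}$ encodes the rotation frequency and the remaining exponents should have nonzero real parts, so that $\gamma^{*}$ is a hyperbolic periodic orbit with a stable manifold $W^{s}(\gamma^{*})$ of positive dimension transverse to $\{\rho=0\}$. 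Because the perturbation $O(\rho^{2})$ is smooth and tangent to $\{\rho=0\}$, the same orbit $\gamma^{*}$ is preserved as a hyperbolic periodic orbit of the perturbed flow for \emph{P1}, and its stable manifold persists by the standard invariant-manifold theorem for hyperbolic periodic orbits.

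Finally, I would blow back down. Trajectories of the perturbed flow lying on $W^{s}(\gamma^{*})$ with $\rho>0$ approach $\gamma^{*}$ exponentially in $\tau$, hence as a positive power of $\rho$ in the original time; in terms of $t$ this gives precisely $z_m(t)=k_m\,(T-t)^{5-m}e^{i\alpha\log|T-t|}(1+g_m(T-t))$ with $g_m(T-t)=o((T-t)^{\sigma})$, where $\sigma>0$ is the spectral gap between the Floquet exponents of $\gamma^{*}$ and the slowest normal contraction rate. The coefficients $k_m$ are the values of $-\zeta A_{m-1}$ on the chosen orbit of $\gamma^{*}$, and the corresponding expansion of $u(t)=z_1(t)/\|z_1(t)\|$ follows by dividing out $|z_1|$, producing the asymptotics with $k_0=-\zeta$. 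The principal technical obstacle is the hyperbolicity calculation: one must diagonalize the monodromy of $\gamma^{*}$ in the eight-dimensional blown-up phase space, verify that no Floquet multiplier lies on the unit circle in the directions transverse to $\gamma^{*}$, and extract the explicit exponent $\sigma$ from this spectrum; once this is done the stable-manifold theorem together with the smallness of the $K$-perturbation on $\{\rho=0\}$ immediately yields the claimed family of solutions.
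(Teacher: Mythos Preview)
Your plan is essentially the paper's own proof: desingularize the origin by a quasi-homogeneous blow-up, recognize the logarithmic spirals (\ref{eq:log-spir}) as a periodic orbit on the exceptional set, establish hyperbolicity of that orbit via a Floquet computation, apply the stable-manifold theorem, and blow back down. The only notable difference is your choice of radial variable $\rho=T-t$ in place of the paper's intrinsic norm $\mu=\mu(z)$; this spares you the paper's Lemma~\ref{utv27} (which recovers $\mu(s)\sim \kappa e^{-s}$, i.e.\ $\mu\sim T-t$, a posteriori) at the cost of making the blow-up time-based rather than purely geometric. The Floquet calculation you flag as the principal obstacle is exactly the content of Lemma~\ref{cycle}: the paper computes the nine characteristic exponents explicitly and finds a single negative one (equal to $-1$, in the radial direction), so the stable manifold of $\gamma^{*}$ is two-dimensional, as your argument requires.
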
 Theorem \ref{mainTh} is then
an obvious corollary of Theorem \ref{th22}.

%
\begin{comment}
\begin{theorem} In a sufficiently small neighbourhood of the origin
there exist the following solutions to (\ref{eq:ham-z}):\begin{eqnarray*}
z_{m}^{*}(t) & = & C_{m}\,\mExpLn{5-m}{T_{*}}\,\left(1+o\left(1\right)\right),\quad m=\overline{1,4},\\
u^{*}(t) & = & C_{0}\,\uExpLn{T_{*}}\,\left(1+o\left(1\right)\right),\qquad t\rightarrow T_{*}-0,\end{eqnarray*}

\end{theorem}

\begin{corollary} There exist the optimal solutions to the problem
(\ref{func2})-(\ref{eq:ogr-u}) of the following form\begin{eqnarray*}
x^{*}\left(t\right) & = & -C_{3}\,\mExpLn 2{T_{*}}\,\left(1+o\left(1\right)\right),\quad\\
y^{*}\left(t\right) & = & -C_{4}\,\mExpLn{\null}{T_{*}}\,\left(1+o\left(1\right)\right),\\
u^{*}(t) & = & C_{0}\,\uExpLn{T_{*}}\,\left(1+o\left(1\right)\right),\qquad t\rightarrow T_{*}-0,\end{eqnarray*}
 \end{corollary} %
\end{comment}
{}

\subsection{Blowing up the singularity}

To prove Theorem \ref{th22} we use the procedure of resolution of
singularity for the Hamiltonian system (\ref{eq:ham-z}) \cite{Zel-Bor1994,Z-L-H_2017}.
We use the same scheme as in \cite{Z-L-H_2017} and the similar change
of coordinates. Consider the blowing up the singularity at the origin
by the map $B:\, z\mapsto(\mu,\tilde{z})$: \begin{eqnarray}
\tilde{z}_{4}=\frac{z_{4}}{\mu},\quad\tilde{z}_{3}=\frac{z_{3}}{\mu^{2}},\quad\tilde{z}_{2}=\frac{z_{2}}{\mu^{3}},\quad\tilde{z}_{1}=\frac{z_{1}}{\mu^{4}},\label{eq:zz}\\
\mu=\left(\left|\frac{z_{4}}{A_{3}}\right|^{24}+\left|\frac{z_{3}}{A_{2}}\right|^{12}+\left|\frac{z_{2}}{A_{1}}\right|^{8}+\left|\frac{z_{1}}{A_{0}}\right|^{6}\right)^{\frac{1}{24}}\nonumber \end{eqnarray}
 where $\mu\in\mathbb{R}_{+}$, $A_{0}=-1/126,\, A_{j+1}=-A_{j}(4-j+i\alpha),\,\alpha=\sqrt{5},\, j=0,1,2$,
and $\tilde{z}\in\mathbb{R}^{8}$ lies on the manifold \[
\Pi=\left\{ \left|\frac{\tilde{z}_{4}}{A_{3}}\right|^{24}+\left|\frac{\tilde{z}_{3}}{A_{2}}\right|^{12}+\left|\frac{\tilde{z}_{2}}{A_{1}}\right|^{8}+\left|\frac{\tilde{z}_{1}}{A_{0}}\right|^{6}=1\right\} .\]
 Let $Q$ denote the cylinder $\Pi\times\{\mu\in\mathbb{R}\}$, and
$Q_{0}=Q\cap\{\mu=0\}$. It was proved \cite{Z-L-H_2017} that  $B$
is a diffeomorphism from $\mathbb{R}^{8}\setminus\left\{ 0\right\} $
onto $Q\cap\left\{ \mu>0\right\} $. In the coordinates $(\mu,\tilde{z})$
the system (\ref{eq:ham-z}) has the form: \begin{align}
\dot{\mu} & =\mathcal{M}(\mu,\tilde{z}) &  &  & u & =\tilde{z}_{1}/|\tilde{z}_{1}|\nonumber \\
\dot{\tilde{z}}_{1} & =\frac{1}{\mu}(\tilde{z}_{2}-4\tilde{z}_{1}\mathcal{M}) &  &  & \dot{\tilde{z}}_{3} & =\frac{1}{\mu}(\tilde{z}_{4}-2\tilde{z}_{2}\mathcal{M})\label{eq:razd-1}\\
\dot{\tilde{z}}_{2} & =\frac{1}{\mu}(\tilde{z}_{3}+\mu^{2}\tilde{z}_{1}-3\tilde{z}_{2}\mathcal{M}) &  &  & \dot{\tilde{z}}_{4} & =\frac{1}{\mu}(u+\mu^{2}K\tilde{z}_{3}-\tilde{z}_{4}\mathcal{M})\nonumber \end{align}
 where\begin{align}
\mathcal{M}(\mu,\tilde{z}) & =\left(\frac{1}{|A_{3}|^{24}}|\tilde{z}_{4}|^{22}\langle\tilde{z}_{4},\frac{\tilde{z}_{1}}{|\tilde{z}_{1}|}\rangle+\frac{1}{2|A_{2}|^{12}}|\tilde{z}_{3}|^{10}\langle\tilde{z}_{3},\tilde{z}_{2}\rangle+\null\right.\nonumber \\
 & \phantom{=\left(\frac{1}{24}\right.}+\frac{1}{3|A_{1}|^{8}}|\tilde{z}_{2}|^{6}\langle\tilde{z}_{2},\tilde{z}_{3}\rangle+\frac{1}{4|A_{0}|^{6}}|\tilde{z}_{1}|^{4}\langle\tilde{z}_{1},\tilde{z}_{2}\rangle+\null\label{eq:M}\\
{} & \phantom{=\left(\frac{1}{24}\right.}+\left.\mu^{2}\frac{1}{|A_{3}|^{24}}|\tilde{z}_{4}|^{22}\langle\tilde{z}_{4},K\tilde{z}_{3}\rangle+\mu^{2}\frac{1}{3|A_{1}|^{8}}|\tilde{z}_{2}|^{6}\langle\tilde{z}_{2},K\tilde{z}_{1}\rangle\right)\nonumber \end{align}
Denote\begin{align*}
\mathcal{M}_{0}(\tilde{z}) & =\left(\frac{1}{|A_{3}|^{24}}|\tilde{z}_{4}|^{22}\langle\tilde{z}_{4},\frac{\tilde{z}_{1}}{|\tilde{z}_{1}|}\rangle+\frac{1}{2|A_{2}|^{12}}|\tilde{z}_{3}|^{10}\langle\tilde{z}_{3},\tilde{z}_{2}\rangle+\null\right.\\
 & \phantom{=\left(\frac{1}{24}\right.}+\left.\frac{8}{|A_{1}|^{8}}|\tilde{z}_{2}|^{6}\langle\tilde{z}_{2},\tilde{z}_{3}\rangle+\frac{6}{|A_{0}|^{6}}|\tilde{z}_{1}|^{4}\langle\tilde{z}_{1},\tilde{z}_{2}\rangle\right)\\
\mathcal{M}_{1}(\tilde{z}) & =\left(\frac{1}{|A_{3}|^{24}}|\tilde{z}_{4}|^{22}\langle\tilde{z}_{4},K\tilde{z}_{3}\rangle+\frac{1}{3|A_{1}|^{8}}|\tilde{z}_{2}|^{6}\langle\tilde{z}_{2},K\tilde{z}_{1}\rangle\right)\end{align*}
   then $\mathcal{M}(\mu,\tilde{z})=\mathcal{M}_{0}(\tilde{z})+\mu^{2}\mathcal{M}_{1}(\tilde{z})$.
For \emph{P2} $\left(K=0\right)$ we have $\mathcal{M}_{1}(\tilde{z})=0$.

We define a new time parametrization by \begin{equation}
ds=\frac{1}{\mu}dt\label{eq:s-t}\end{equation}
 After this reparametrization system (\ref{eq:razd-1}) becomes smooth\begin{align}
\mu' & =\mu\mathcal{M}, &  &  & u & =\tilde{z}_{1}/|\tilde{z}_{1}|\nonumber \\
\tilde{z}_{1}' & =\tilde{z}_{2}-4\tilde{z}_{1}\mathcal{M}, &  &  & \tilde{z}_{3}' & =\tilde{z}_{4}-2\tilde{z}_{3}\mathcal{M},\label{eq:bl-notmod}\\
\tilde{z}_{2}' & =\tilde{z}_{3}+\mu^{2}\tilde{z}_{1}-3\tilde{z}_{2}\mathcal{M}, &  &  & \tilde{z}_{4}' & =u+\mu^{2}K\tilde{z}_{3}-\tilde{z}_{4}\mathcal{M},\nonumber \end{align}
When $K=0$,  one has $\mathcal{M}_{1}(\tilde{z})=0$ and (\ref{eq:bl-notmod})
turns into\begin{align}
\mu' & =\mu\mathcal{M}_{0}(\tilde{z}), &  &  & u & =\tilde{z}_{1}/|\tilde{z}_{1}|\nonumber \\
\tilde{z}_{1}' & =\tilde{z}_{2}-4\tilde{z}_{1}\mathcal{M}_{0}(\tilde{z}), &  &  & \tilde{z}_{3}' & =\tilde{z}_{4}-2\tilde{z}_{3}\mathcal{M}_{0}(\tilde{z}),\label{eq:bl-mod}\\
\tilde{z}_{2}' & =\tilde{z}_{3}-3\tilde{z}_{2}\mathcal{M}_{0}(\tilde{z}), &  &  & \tilde{z}_{4}' & =u-\tilde{z}_{4}\mathcal{M}_{0}(\tilde{z}),\nonumber \end{align}
Thus, the system (\ref{eq:bl-notmod}) is a small perturbation of
(\ref{eq:bl-mod}).

\subsection{Periodic solution }

Consider the particular solution (\ref{eq:log-spir}) for $\zeta=E$,
where $E$ is the identity $2\times2$-matrix \begin{eqnarray}
{z}_{m}^{*}(t) & := & {z}_{mE}^{*}(t)=-A_{m-1}\mExpLn{5-m}{T^{*}},\quad m=\overline{1,4},\label{eq:log-spir-1}\\
{u}^{*}(t) & := & u_{E}^{*}(t)=-\uExpLn{T^{*}},\nonumber \end{eqnarray}
Rewrite (\ref{eq:log-spir-1}) in coordinates $(\mu,\tilde{z})$ \begin{align}
\mu^{*}(t) & =T^{*}-t, & \mathcal{M}_{0} & =-1, & \tilde{u}^{*}(t)=-e^{i\alpha\ln|T^{*}-t|},\nonumber \\
\tilde{z}_{m}^{*}(t) & =-A_{m-1}e^{i\alpha\ln|T^{*}-t|}, & m & =\overline{1,4},\label{eq:cycle*}\end{align}
Passing in~(\ref{eq:cycle*}) to the parameter $s$, we get a solution
of (\ref{eq:bl-mod})\begin{align*}
\mu^{*}\left(s\right) & =T^{*}e^{-s}, & \tilde{u}^{*}(s) & =-e^{-i\alpha s},\\
\tilde{z}_{m}^{*}(s) & =-A_{m-1}e^{-i\alpha s},\quad m=\overline{1,4}.\end{align*}

Denote $\xi\left(s\right)=\left(\mu\left(s\right),\,\tilde{z}(s)\right)\in\mathbb{R}^{9}$.
Consider $\xi^{0}\left(s\right)=\left(0,\,\tilde{z}^{*}(s)\right)$.
Since for $\mu=0$ the systems (\ref{eq:bl-mod}) and (\ref{eq:bl-notmod})
coincide, $\xi^{0}\left(s\right)$ is a~periodic solution (a~cycle)
for both systems (\ref{eq:bl-notmod}) and (\ref{eq:bl-mod}). We
will study the behaviour of solutions of (\ref{eq:bl-notmod}) in
the neighbourhood of $\xi^{0}\left(s\right)$. We will show that $\xi^{0}\left(s\right)$
is a hyperbolic cycle.

\begin{utv}\label{cycle} The periodic solution $\xi^{0}\left(s\right)$
has exactly one characteristic exponent with negative real part, seven
characteristic exponents with positive real part, and exactly one
characteristic exponent equals zero.

\end{utv}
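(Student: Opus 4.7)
I would use classical Floquet analysis in rotating coordinates, reducing everything to the model system. Because $\mu^{*}\equiv 0$ along $\xi^{0}(s)$ and the terms distinguishing (\ref{eq:bl-notmod}) from (\ref{eq:bl-mod}) are quadratic in $\mu$ (so their first derivatives vanish on $\{\mu=0\}$), the variational equation along the cycle is the same for both systems. Hence it is enough to analyse the model system (\ref{eq:bl-mod}). The diagonal $SO(2)$-action $\tilde{z}_{m}\mapsto e^{i\beta}\tilde{z}_{m}$ is a symmetry of (\ref{eq:bl-mod}), so the substitution $\hat{z}_{m}(s)=e^{i\alpha s}\tilde{z}_{m}(s)$ converts the cycle $\xi^{0}(s)$ into the equilibrium $\hat{\xi}^{*}=(0,-A_{0},-A_{1},-A_{2},-A_{3})$ of the resulting autonomous system. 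The nine characteristic exponents of $\xi^{0}(s)$ are then exactly the nine eigenvalues of the Jacobian $A=DF(\hat{\xi}^{*})$.

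The matrix $A$ is block-diagonal in the decomposition $(\mu,\hat{z})$: at $\mu=0$ every coupling term between the two blocks carries a factor $\mu^{2}$ whose first derivative vanishes. The $1\times 1$ $\mu$-block reduces to $\delta\dot{\mu}=\mathcal{M}_{0}(\tilde{z}^{*})\,\delta\mu=-\delta\mu$, contributing the unique negative exponent $-1$. Inside the $8\times 8$ $\hat{z}$-block, the whole $SO(2)$-orbit of $\hat{\xi}^{*}$ consists of equilibria of the rotating system, so its infinitesimal generator $R\hat{\xi}^{*}=(0,iA_{0},iA_{1},iA_{2},iA_{3})$ lies in $\ker A$; this supplies the zero exponent.

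The remaining seven eigenvalues of the $\hat{z}$-block must then all have positive real part. Writing $A$ in complex notation reveals a chain-type structure $\hat{z}_{1}\to\hat{z}_{2}\to\hat{z}_{3}\to\hat{z}_{4}\to\hat{u}$ corrected by a rank-one contribution from the differential of $\mathcal{M}_{0}$. This is precisely the characteristic matrix that was studied for the homogeneous problem in \cite{Chukanov,Zel-Bor1994}, where the seven non-trivial roots of its characteristic polynomial were shown to lie in $\{\mathrm{Re}\,\lambda>0\}$. I would reproduce that calculation by using the $R$-equivariance to identify $\mathbb{R}^{8}$ with $\mathbb{C}^{4}$, splitting the linearisation into $R$-invariant subspaces, and verifying the sign of the real parts of all roots of the resulting polynomial of reduced degree.

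The main obstacle is precisely this last verification. The $8\times 8$ block is not sparse, the $\mathcal{M}_{0}$-gradient terms entangle all four pairs, and no off-the-shelf stability criterion applies. One has either to carry out the characteristic-polynomial computation explicitly in the complex chain basis and apply the Routh--Hurwitz criterion, or to appeal to the explicit root determination already performed in \cite{Chukanov,Zel-Bor1994}. Summing the three contributions gives one negative, one zero, and seven positive characteristic exponents, as required.
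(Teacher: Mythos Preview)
Your approach is essentially the paper's: the Lyapunov transformation $P(s)$ the authors ``guess'' is precisely your rotating-frame substitution $\hat{z}_{m}=e^{i\alpha s}\tilde{z}_{m}$, reducing the Floquet analysis to the eigenvalue problem for a constant $9\times 9$ matrix $J$, and the $\mu$-block indeed decouples to give the exponent $-1$. The paper resolves the obstacle you single out not by Routh--Hurwitz or by citation but by brute force: it writes out $J$ explicitly and factors its characteristic polynomial as $(\lambda+1)\lambda(\lambda-4)(\lambda-5)(\lambda-93)\bigl((\lambda-5)^{2}\lambda^{2}+36(\lambda-5)\lambda+630\bigr)$, from which all nine exponents (and hence the sign pattern) are read off directly.
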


\begin{proof} 

 To prove this, we linearize (\ref{eq:bl-notmod}) and (\ref{eq:bl-mod})
 around $\xi^{0}\left(s\right)$. Since the right sides of (\ref{eq:bl-notmod})
and (\ref{eq:bl-mod}) differ by $\bar{o}\left(\mu\right)$ we get
the same equations of variation \begin{equation}
h'=F_{\xi}\left(\xi^{0}\left(s\right)\right)h\label{eq:var1}\end{equation}
where $F$ denotes the right side of (\ref{eq:bl-mod}) and $F_{\xi}$
is the Jacobian matrix of $F$. Here we will not write $F_{\xi}\left(\xi^{0}\left(s\right)\right)$
explicitly, since it is too cumbersome. Since $F$ does not explicitly
depend on $s$, and $\xi^{0}\left(s\right)$ is a periodic trajectory,
then $F_{\xi}\left(\xi^{0}\left(s\right)\right)$ a continuous periodic
matrix.  By Floquet theory  there exists \cite{Farkas} a periodic
invertible state-space transformation $\hat{h}=P\left(s\right)h$
(which is called the Lyapunov transformation) such that in the new
coordinates the system (\ref{eq:var1}) has the form $\hat{h}'=J\hat{h}$,
where $J$ is a constant matrix. Moreover, the eigenvalues of the
matrix $J$ will be characteristic exponents of the cycle~$\xi^{0}\left(s\right)$.
It is difficult in general to find $P\left(s\right)$ explicitly,
however one can guess the matrix $P$ in our case:  \[
P_{9\times9}=\left(\begin{array}{ccccc}
1 & 0 & 0 & 0 & 0\\
0 & P_{1} & 0 & 0 & 0\\
0 & 0 & P_{2} & 0 & 0\\
0 & 0 & 0 & P_{3} & 0\\
0 & 0 & 0 & 0 & P_{4}\end{array}\right),\quad P_{j}=\left(\begin{array}{cc}
\cos\alpha & -\sin\alpha\\
\sin\alpha & \cos\alpha\end{array}\right),\,\, j=\overline{1,4}\]
Here $0$ are zero matrices of corresponding sizes. 

A straightforward computation yields the matrix $J$ to be {\small 
\[ J =
\begin{pmatrix}
%& \mu & \hat \psi_1 & \hat \psi_2 & \hat \phi_1 & \hat \phi_2 & \hat x_1 &\hat x_2& \hat y_1 & \hat y_2\\ 
%\mu & 
-1 &0 & 0 & 0 & 0 & 0 & 0 & 0 & 0\\ 
%\hat \psi_1 & 
0 & 24 & -4\sqrt 5 & \frac{316}{63} & \frac{100\sqrt 5}{63} & -\frac{67}{63} & -\frac{73\sqrt 5}{63} & -\frac{53}{63} & \frac{47\sqrt 5}{63} \\ 
%\hat \psi_2 & 
0 & \sqrt 5 & 4 & 0 & -1 & 0 & 0 & 0 & 0 \\ 
%\hat \phi_1 & 
0 & 60 & -9\sqrt 5 & \frac{442}{21} & \frac{79\sqrt 5}{21} & -\frac{46}{21} & -\frac{73\sqrt 5}{21} & -\frac{53}{21} & \frac{47\sqrt 5}{21} \\ 
%\hat \phi_2 & 
0 & 15\sqrt 5 & -\frac{45}{4} & \frac{463\sqrt 5}{84} & \frac{188}{21} & -\frac{67\sqrt 5}{84} & -\frac{281}{84} & -\frac{53\sqrt 5}{84} & \frac{235}{84} \\ 
%\hat x_1 & 
0 & -70 & \frac{21\sqrt 5}{2} & -\frac{379}{18} & -\frac{50\sqrt 5}{9} &\frac{103}{18} & \frac{55\sqrt 5}{18} & \frac{71}{18} & -\frac{47\sqrt 5}{18} \\ 
%\hat x_2 & 
0 & -70\sqrt 5 & \frac{105}{2} & -\frac{379\sqrt 5}{18} & -\frac{250}{9} &\frac{85\sqrt 5}{18} & \frac{401}{18} & \frac{53\sqrt 5}{18} & -\frac{217}{18} \\ 
%\hat y_1 & 
0 & -105 & \frac{63\sqrt 5}{4} & -\frac{379}{12} & -\frac{25\sqrt 5}{3} &\frac{67}{12} & \frac{73\sqrt 5}{12} & \frac{65}{12} & -\frac{59\sqrt 5}{12} \\ 
%\hat y_2 & 
0 & 105\sqrt 5 & \frac{189}{4} & \frac{379\sqrt 5}{12} & \frac{125}{3} & -\frac{67\sqrt 5}{12} & -\frac{365}{12} & -\frac{41\sqrt 5}{12} & \frac{247}{12}  
\end{pmatrix}
\]
}% end of small
 
The characteristic polynomial of $J$ is \[
(\lambda+1)\lambda(\lambda-4)(\lambda-5)(\lambda-93)\left((\lambda-5)^{2}\lambda^{2}+36(\lambda-5)\lambda+630\right)=0\]
 Thus \begin{eqnarray*}
\lambda_{1} & = & -1,\quad\,\lambda_{2}=0,\quad\lambda_{3}=4,\quad\,\lambda_{4}=5,\,\quad\lambda_{5}=93\\
\lambda_{6,7} & = & \frac{1}{2}\left(5+\sqrt{47\pm12\sqrt{34}i}\right)\approx4.65903\pm4.0511i\\
\lambda_{8,9} & = & \frac{1}{2}\left(5-\sqrt{47\pm12\sqrt{34}i}\right)\approx0.340974\pm4.0511i\end{eqnarray*}
are the eigenvalues of $J$. \end{proof} %
\begin{figure}
\begin{centering}
\includegraphics[scale=0.98]{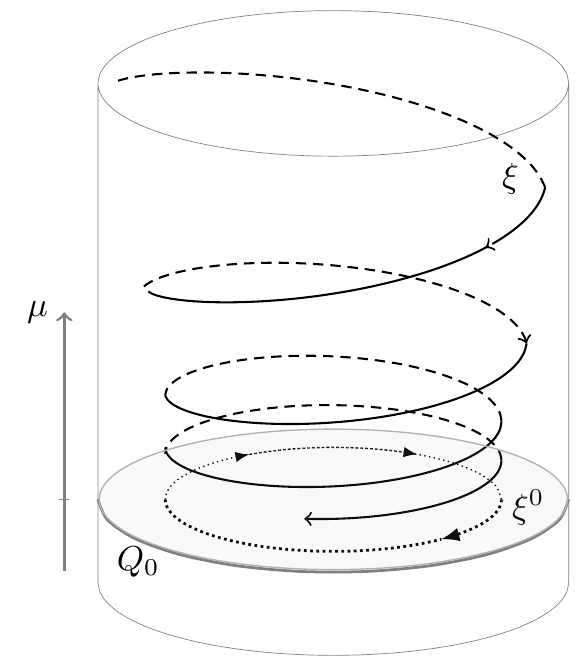}
\par\end{centering}

\caption{solutions of the blown-up Hamiltonian system that lie on $Q$ and
tend to $\xi^{0}$ }

\end{figure}

 Therefore $\xi^{0}(s)$ is not orbitally stable, and there are the
contraction in the $\mu$-direction and the expansion in the other
directions \cite{Farkas}.  Applying the invariant manifolds theorem
\cite{Hartman} for $\xi^{0}(s)$ we obtain that  there exist solutions
 of both (\ref{eq:bl-mod}) and (\ref{eq:bl-notmod}) satisfying
\begin{equation}
\left\Vert \xi(s+s_{0})-\xi^{0}(s)\right\Vert e^{cs}\to0\,\,\text{ as }s\to\infty\label{eq:hartman1}\end{equation}
 for some $s_{0}\,$ and $c>0$. (\ref{eq:hartman1}) can be written
as follows \begin{align}
|\mu(s+s_{0})|e^{cs} & \to0\nonumber \\
|\tilde{z}_{m}(s+s_{0})-\tilde{z}_{m}^{*}(s)|e^{cs} & \to0,\quad\, m=\overline{1,4}.\label{eq:hartman}\end{align}
 Thus we construct the two-dimensional stable manifolds of $\xi^{0}(s)$
for (\ref{eq:bl-mod}) and (\ref{eq:bl-notmod}) (Fig.~2). In the
next section we prove that these stable manifolds are woven in logarithmic
spirals.

\subsection{Proof the main result}

Let $\xi\left(s\right)=\left(\mu\left(s\right),\tilde{z}\left(s\right)\right)$
be a solution of (\ref{eq:bl-notmod}) that satisfies (\ref{eq:hartman1}).
Denote by $T$ the time to reach the origin along $\xi\left(s\right)$. 

\begin{utv}\label{utv27} 

$\mu\left(s\right)=\kappa e^{-s}\left(1+\bar{o}\left(e^{-c_{\mu}s}\right)\right),\quad s\to+\infty,$
where $\kappa,\,\, c_{\mu}$ are some positive constants.

\end{utv}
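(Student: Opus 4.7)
The plan is to read off the asymptotic directly from the $\mu$-equation of (\ref{eq:bl-notmod}), namely $\mu' = \mu\,\mathcal{M}(\mu,\tilde z)$, by exploiting that $\xi(s)=(\mu(s),\tilde z(s))$ lies on the stable manifold of $\xi^{0}$ just constructed, so that $(\mu(s+s_{0}),\tilde z(s+s_{0}))$ converges to the cycle at an exponential rate. The central observation is that along the cycle $\mu \equiv 0$, so $\mathcal{M}(0,\tilde z^{*}(s)) = \mathcal{M}_{0}(\tilde z^{*}(s))$, and this equals $-1$ identically in $s$ by the explicit formulas in (\ref{eq:cycle*}).

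Concretely, I would fix $c\in(0,1)$ (any value strictly below $|\mathrm{Re}\,\lambda_{1}| = 1$ from Lemma~\ref{cycle}) and invoke (\ref{eq:hartman}) to obtain an $s_{0}$ with $\mu(s+s_{0}) = \bar{o}(e^{-cs})$ and $\tilde z(s+s_{0}) - \tilde z^{*}(s) = \bar{o}(e^{-cs})$. Because $\mathcal{M}$ is smooth in a neighbourhood of the cycle --- the only potentially singular factor $\tilde z_{1}/|\tilde z_{1}|$ appearing in $\mathcal{M}$ is harmless since $|\tilde z_{1}^{*}| = |A_{0}| > 0$, and all other ingredients are polynomial in $\tilde z_{j}$ and $|\tilde z_{j}|^{2}$ --- a first-order Taylor expansion at $\xi^{0}(s)$ yields
\[
\mathcal{M}\bigl(\mu(s+s_{0}),\tilde z(s+s_{0})\bigr) = -1 + r(s), \qquad |r(s)| \le C e^{-cs},
\]
for some constant $C>0$.

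Integrating $(\log\mu)' = \mathcal{M}$ from a fixed large $s_{1}$ then gives
\[
\log\mu(s+s_{0}) = \log\mu(s_{1}+s_{0}) - (s-s_{1}) + \int_{s_{1}}^{s} r(\tau)\,d\tau .
\]
Exponential decay of $r$ guarantees that $R_{\infty} := \int_{s_{1}}^{\infty} r(\tau)\,d\tau$ exists and $\bigl|\int_{s}^{\infty} r(\tau)\,d\tau\bigr| \le (C/c)e^{-cs} = \bar{o}(e^{-cs})$. Setting $\log\kappa := \log\mu(s_{1}+s_{0}) + s_{1} + R_{\infty}$ (so $\kappa > 0$, since $\mu > 0$ throughout the blown-up chart), I obtain $\log\mu(s+s_{0}) + s = \log\kappa + \bar{o}(e^{-cs})$, whence $\mu(s+s_{0}) = \kappa e^{-s}\bigl(1 + \bar{o}(e^{-cs})\bigr)$. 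Reabsorbing the shift $s_{0}$ into $\kappa$ and taking $c_{\mu} := c$ delivers the claim. The argument is essentially a one-dimensional Gronwall-type calculation driven by the known rate of approach to a hyperbolic cycle, so I do not anticipate any substantive obstacle beyond the routine smoothness verification for $\mathcal{M}$ at the cycle and the choice of a constant $c_\mu$ strictly less than $1$, both of which follow directly from the material established in the preceding subsections.
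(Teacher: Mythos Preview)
Your proposal is correct and follows essentially the same line as the paper's proof: both integrate $(\log\mu)'=\mathcal{M}(\mu,\tilde z)$ using that $\mathcal{M}=-1+\bar{o}(e^{-cs})$ along the stable-manifold trajectory (the paper phrases this as $\frac{d}{ds}\ln(\mu/\mu^{*})=\mathcal{M}-\mathcal{M}_{0}(\tilde z^{*})$ with $\mu^{*}=T^{*}e^{-s}$, which is the same computation up to the additive constant $\ln T^{*}$). Your remarks on the smoothness of $\mathcal{M}$ near the cycle (via $|\tilde z_{1}^{*}|=|A_{0}|>0$) and on the admissible range $c_{\mu}<1$ are welcome clarifications that the paper leaves implicit.
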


\begin{proof} 

Let $\xi^{*}\left(s\right)=\left(\mu^{*}\left(s\right),\tilde{z}^{*}\left(s\right)\right)$
and $\xi\left(0\right)=\xi^{*}\left(0\right)$.    Rewrite the
first equations of systems (\ref{eq:bl-notmod}) and (\ref{eq:bl-mod})

 \[
\left\{ \begin{aligned}\left(\mu(s)\right)^{-1}\frac{d}{ds}\mu(s) & =\mathcal{M}(\mu(s),\tilde{z}(s))\\
\left(\mu^{*}(s)\right)^{-1}\frac{d}{ds}\mu^{*}(s) & =\mathcal{M}_{0}(\mu^{*}(s),\tilde{z}^{*}(s))\end{aligned}
\right.\]
Note that $\mathcal{M}_{0}(\mu^{*}(s),\tilde{z}^{*}(s))=-1$. Consider
the difference between the above equations \[
\left(\mu(s)\right)^{-1}\frac{d}{ds}\mu(s)-\left(\mu^{*}(s)\right)^{-1}\frac{d}{ds}\mu^{*}(s)=\mathcal{M}(\mu(s),\tilde{z}(s))-\mathcal{M}_{0}(\mu^{*}(s),\tilde{z}^{*}(s))\]
 Hence 

\begin{equation}
\frac{d}{ds}\ln\left(\frac{\mu(s)}{\mu^{*}(s)}\right)=\mathcal{M}(\mu(s),\tilde{z}(s))-\mathcal{M}_{0}(\mu^{*}(s),\tilde{z}^{*}(s))\label{eq:lnmu}\end{equation}
 By (\ref{eq:M}) and (\ref{eq:hartman}) we have for some $c_{\mu}>0$
\begin{equation}
\left|\mathcal{M}(\mu(s),\tilde{z}(s))-\mathcal{M}_{0}(\mu^{*}(s),\tilde{z}^{*}(s))\right|e^{c_{\mu}s}\to0\quad\text{ as }s\to\infty\label{eq:M-infty}\end{equation}
 or \[
\mathcal{M}(\mu(s),\tilde{z}(s))=\mathcal{M}_{0}(\mu^{*}(s),\tilde{z}^{*}(s))+\bar{o}\left(e^{-c_{\mu}s}\right)\quad\text{ as }s\to\infty\]
  (\ref{eq:M-infty}) implies \[
q_{*}=\int_{0}^{\infty}\left(\mathcal{M}(\mu(v),\tilde{z}(v))-\mathcal{M}_{0}(\mu^{*}(v),\tilde{z}^{*}(v))\right)dv<\infty\]
Thence we have \begin{multline*}
\int_{0}^{s}\left(\mathcal{M}(\mu(v),\tilde{z}(v))-\mathcal{M}_{0}(\mu^{*}(v),\tilde{z}^{*}(v))\right)dv=\\
\begin{split}\, & =q{}_{*}-\int_{s}^{\infty}\left(\mathcal{M}(\mu(v),\tilde{z}(v))-\mathcal{M}_{0}(\mu^{*}(v),\tilde{z}^{*}(v))\right)dv=\,\\
 & =q_{*}-\int_{s}^{\infty}\bar{o}\left(e^{-c_{\mu}v}\right)dv=q_{*}+\bar{o}\left(e^{-c_{\mu}s}\right)\quad\text{ as }s\to\infty.\end{split}
\end{multline*}
   Integrating (\ref{eq:lnmu})  and taking into account $\ln\left(\frac{\mu(0)}{\mu^{*}(0)}\right)=0$
we obtain  \begin{eqnarray}
\ln\left(\frac{\mu(s)}{\mu^{*}(s)}\right)=q_{*}+\bar{o}\left(e^{-c_{\mu}s}\right), & s\to\infty\label{eq:ln}\end{eqnarray}
 By exponentiating (\ref{eq:ln}), we go to \begin{eqnarray*}
\frac{\mu(s)}{\mu^{*}(s)}=e^{q_{*}}\left(1+\bar{o}\left(e^{-c_{\mu}s}\right)\right), & s\to\infty\end{eqnarray*}
 Since $\mu^{*}\left(s\right)=T^{*}e^{-s}$ we get \[
\mu\left(s\right)=\kappa e^{-s}\left(1+\bar{o}\left(e^{-c_{\mu}s}\right)\right),\quad s\to+\infty,\]
 where $\kappa=T^{*}e^{q_{*}}$. \end{proof}

Using the definition (\ref{eq:s-t}) of $s\left(t\right)$ and Lemma
\ref{utv27} we get  \[
T-t=\int_{s\left(t\right)}^{\infty}\mu\left(s\right)ds=\kappa e^{-s\left(t\right)}\left(1+\bar{o}\left(e^{-c_{\mu}s\left(t\right)}\right)\right),\,\, t\to T-0\]
Hence \begin{align}
e^{-s\left(t\right)} & =\kappa^{-1}\left(T-t\right)\left(1+\bar{o}\left(e^{-c_{\mu}s\left(t\right)}\right)\right)^{-1}=\label{eq:bound-e-s}\\
 & =\kappa^{-1}\left(T-t\right)\left(1+\bar{o}\left(e^{-c_{\mu}s\left(t\right)}\right)\right),\quad\, t\to T-0\nonumber \end{align}
Denote $g_{1}\left(t\right)=\kappa^{-1}\left(1+\bar{o}\left(e^{-c_{\mu}s\left(t\right)}\right)\right)$.
In view of (\ref{eq:bound-e-s}) it is seen that \begin{equation}
g_{1}\left(t\right)=\kappa^{-1}\left(1+\bar{o}\left(\left(T-t\right){}^{c_{\mu}}\right)\right)\,\,\text{ as }t\to T-0\label{eq:bound-g1}\end{equation}

Using (\ref{eq:bound-e-s})--(\ref{eq:bound-g1}) and taking into
account $a^{\omega}=e^{\omega\ln a}\,\,\left(a>0,\,\,\omega\mathbb{\in C}\right)$,
we obtain  \begin{align}
e^{-i\alpha s(t)} & =e^{i\alpha\ln\left((T-t)g_{1}\left(t\right)\right)}=\nonumber \\
 & =e^{i\alpha\ln(T-t)}e^{i\alpha\ln g_{1}\left(t\right)}=\label{eq:ist}\\
 & =e^{i\alpha\ln(T-t)}e^{-i\alpha\ln\kappa}\left(1+\bar{o}\left(\left(T-t\right){}^{c_{\mu}}\right)\right),\qquad t\to T-0\nonumber \end{align}

Consider $\xi\left(s\right)=\left(\mu\left(s\right),\tilde{z}\left(s\right)\right)$
satisfying (\ref{eq:hartman}). Put \[
\tilde{z}_{m}(s)=\Phi_{m}\left(s\right)e^{i\theta_{m}\left(s\right)},\quad m=1,\ldots,4.\]
 Lemma \ref{utv27} and (\ref{eq:hartman}) imply that  in the neighbourhood
of $\xi^{0}(s)$ \begin{align*}
\Phi_{m}\left(s\right) & =\left|A_{m-1}\right|\left(1+\bar{o}\left(e^{-cs}\right)\right),\\
\theta_{m}\left(s\right) & =\pi-\alpha s+\beta+\bar{o}\left(e^{-cs}\right),\,\quad\, s\to\infty\end{align*}
 where $\beta=\alpha s_{0}\in\mathbb{R}$, $s_{0}$ is the same as
in (\ref{eq:hartman}), $A_{m-1}\left(m=1,\ldots,4\right)$ are defined
in (\ref{eq:log-spir}). Hence \begin{equation}
\tilde{z}_{m}(s)=-B_{m-1}e^{-i\alpha s}(1+\bar{o}\left(e^{-cs}\right)),\,\quad\, s\to\infty\label{eq:z-s}\end{equation}
 where $B_{m-1}=A_{m-1}e^{i\alpha s_{0}}\in\mathbb{C},\,\, m=1,\ldots,4.$ 

Let $s\left(t\right)$ be determined by (\ref{eq:s-t}). Setting $s=s\left(t\right)$
in (\ref{eq:z-s}) and taking into account  (\ref{eq:bound-e-s})--(\ref{eq:ist})
we obtain \begin{equation}
\tilde{z}_{m}(t)=D_{m-1}e^{i\alpha\ln\left(T-t\right)}(1+\bar{o}((T-t)^{b})),\,\,\label{eq:zz-zz}\end{equation}
 as $t\to T-0.$ Here $b>0,\,\, D_{m-1}=-B_{m-1}e^{-i\alpha\ln\kappa}\in\mathbb{C},\,\, m=1,\ldots,4$.
Using (\ref{eq:zz}) and (\ref{eq:zz-zz}) we get \begin{eqnarray*}
z_{m}(t) & = & D_{m-1}(T-t)^{m-1}e^{i\alpha\ln(T-t)}(1+\bar{o}((T-t)^{\sigma})),\,\quad\, t\to T-0\end{eqnarray*}
 This proves Theorem \ref{th22}, and therefore Theorem \ref{mainTh}.

\subsection*{Acknowledgments }

The authors would like to thank Prof. M.I. Zelikin and Prof. L.V.
Lokutsievskiy for helpful discussions. 

%%%%%%%%%%%%%%%%%%%%%%%%%%%%%%%%%%%%%%%%

%%%%%%%%%%%%%%%%%%%%%%%%%%%
\end{document}